\documentclass[12pt,reqno]{amsart}
\usepackage{amscd,amssymb,amsmath,amsthm}
\usepackage[arrow,matrix]{xy}
\usepackage{graphicx}
\usepackage{cite}
\usepackage{geometry}
\topmargin=0in
 \textwidth 24in
\parindent=0.7truecm
 \textheight 1in
\oddsidemargin=9cm
 \evensidemargin=9cm

\geometry{left=2.7cm} \geometry{right=2.2cm} \geometry{top=2.9cm}
\geometry{bottom=2.8cm}
\newtheorem{thm}{Theorem}

\newtheorem{lem}{Lemma}
\newtheorem{pro}{Proposition}
\newtheorem{rem}{Remark}
\newtheorem{cor}{Corollary}

\numberwithin{equation}{section} \setcounter{tocdepth}{1}

\newcommand{\bea}{\begin{eqnarray}}
\newcommand{\eea}{\end{eqnarray}}




\begin{document}
\title{On positive solutions  of the homogeneous Hammerstein integral equation}
\author{Yu. Kh. Eshkabilov, F.H. Haydarov}
\address{Yu.\ Kh.\ Eshkabilov\\ National University of Uzbekistan,
Tashkent, Uzbekistan.} \email {yusup62@mail.ru}

 \address{F.H.Haydarov\\ National University of Uzbekistan,
Tashkent, Uzbekistan.}
 \email {haydarov\_ imc@mail.ru.}

\begin{abstract} In this paper the existence and uniqueness positive
fixed points of the one nonlinear integral operator are discussed.
We prove that existence finite positive solutions  of the integral
equation of Hammerstein type. Obtained results applied to study
Gibbs measures for models on a Cayley tree.

\end{abstract}

\maketitle

{\bf{Key words.}} integral equation of Hammerstein type, fixed
point of operator, Gibbs measure, Cayley tree.\\

{\bf{Mathematics Subject Classifications (2010)}} { Primary:
45B10, 47H30; Secondary: 47H10, 47J10.}

\section{Introduction} \label{sec:intro}

It is well known that integral equations have wide applications in
engineering, mechanics, physics, economics, optimization,
vehicular traffic, biology, queuing theory and so on (see
\cite{gl1994},\cite{ko1972}, \cite{oh1996}, \cite{ab2002},
\cite{kz1984}). The theory of integral equations is rapidly
developing with the help of tools in functional analysis, topology
and fixed point theory. Therefore, many different methods are used
to obtain the solution of the nonlinear integral equation.
Moreover, some methods can be found in Refs. \cite{apz2000},
\cite{db2003}, \cite{a2003}, \cite{pm2004}, \cite{aed2005},
\cite{ak2006}, \cite{a2008}, \cite{aee2009}, to discuss and obtain
the solution of Hammerstein integral equation. In \cite{ak2006}
J.Appell and A.S. Kalitvin used fixed point methods and methods of
nonlinear spectral theory to obtain the solution of integral
equations of Hammerstein or Uryson type. The existence of positive
solutions of abstract integral equations of Hammerstein type is
discussed in  \cite{a2008}. In \cite{aee2009} M.A. Abdou, M.M.
El-Borai and M.M. El-Kojok the existence and uniqueness solution
of the nonlinear integral equation of Hammerstein type with
discontinuous kernel are discussed.

This present paper, we study solvability homogeneous  integral
equation of Hammerstein type. An integral equation of the form

\begin{equation}\label{e1.1}
\int_0^1K(t,u)\Psi(t,f(u))du=f(t)
\end{equation}
is called the homogeneous Hammerstein integral equation, where
$K(t,u)$ is continuous real-valued function defined on $0 \leq t
\leq 0, \,\ 0\leq u \leq 1, \,\
\Psi:[0,1]\times\mathbb{R}\rightarrow \mathbb{R}$  is a continuous
function and $f(t)$ is unknown function from $C[0,1]$.

Let $\Psi(t,z), \frac{\partial}{\partial z}\Psi(t,z)$ be
continuous and bounded for $t\in[0,1]$ and for all $z$. Then
\cite{pjc1997} the Hammerstein integral equation (\ref{e1.1}) has
a solution. Assume that  $\Psi(t,z)$ is a bounded continuous
function for $t\in[0,1]$ and $z\in\mathbb{R}$. In this case, also
the Hammerstein integral equation (\ref{e1.1}) has
\cite{k2003/2004} a solution. For the necessary details of this
theorem and for more results on the Hammerstein integral equation,
we refer to Petryshyn and Fitzpatrik \cite{pf1970}, Browder
\cite{b1971}, Brezis and Browder \cite{bb1975}.

Recently, in \cite{e2011} consider the case $\Psi(t,z)=\Psi(z)$.
Let $\Psi(z)$ is monotone left-continuous function on
$[0,+\infty)$ and $lim_{x\rightarrow0}\frac{\Psi(z)}{z}=+\infty,
\,\ lim_{x\rightarrow +\infty}\frac{\Psi(z)}{z}=0.$ Then
\cite{e2011} the integral equation of Hammerstein type
(\ref{e1.1}) has a solution.

In this work, we will consider the following integral equation of
Hammerstein type (i.e. in (\ref{e1.1})
$\Psi(t,z)=\Psi(z)=z^{\vartheta}$):

\begin{equation}\label{e1.2}
\int_0^1K(t,u)f^{\vartheta}(u)du=f(t), \,\ \vartheta>1
\end{equation}

on the $C[0,1]$, where $K(t,u)$ is strictly positive continuous
function.

By the Theorem 44.8 from \cite{kz1984} follows the existence of
nontrivial positive solution of the Hammerstein equation
(\ref{e1.2}). We study the problem of existence finite number
positive solutions of the integral equation of Hammerstein type
(\ref{e1.2}).

Consider the nonlinear operator $R_{\alpha}$ on the cone of
positive continuous functions on $[0,1]:$

\begin{equation}\label{e1.3}
\left(R_{\alpha}f\right)(t)=\left({\int_0^1K(t,u)f(u)du\over
\int_0^1 K(0,u)f(u)du}\right)^{\alpha},
\end{equation} \
where $K(t,u)$ is given in the integral equation of Hammerstein
type (\ref{e1.1}) and  $\alpha>0$. Operator of the form
(\ref{e1.3}) arising in the theory of Gibbs measures (see
\cite{er2010}, \cite{ehr2013}, \cite{u2013}). Positive fixed
points of the operator $R_{k}, \,\ k\in \mathbb{N}$ and they
numbers is very important to study Gibbs measures for models on a
Cayley tree.

In \cite{er2010}, in the case $\alpha=1$ the uniqueness positive
fixed points of the nonlinear operator $R_{\alpha}$  (\ref{e1.3})
is proved. In \cite{ehr2013}, in the case $\alpha=k\in \mathbb{N},
k>1$ for the nonlinear operator $R_{\alpha}$ was proved the
existence of positive fixed point and the existence Gibbs measure
for some mathematical models on a Cayley tree.

The aim of this work is to study the existence finite number
positive solutions of the Hammerstein equation (\ref{e1.2}) on the
space of continuous functions on $[0,1].$  The plan of this paper
is as follows. In the second section using properties of
Hammerstein equation (\ref{e1.2}) we reduce some statements on the
positive fixed point of the operator $R_{\alpha}$. In the third
section we construct the strictly positive continuous kernel
$K(t,u)$ such that, the corresponding Hammerstein equation
(\ref{e1.2}) has $n\in\mathbb{N}$ positive solutions. In the
fourth section obtained results for the operator $R_{\alpha}$
applied to study Gibbs measures for models on a Cayley tree.

\section{Existence and uniqueness of positive fixed points of the operator $R_{\alpha}$}

In this section we study the existence and the uniqueness positive
fixed points of the nonlinear operator $R_{\alpha}$ (\ref{e1.3}). Put\\
$$C^+[0,1]=\{f\in C[0,1]: f(x)\geq 0\}, \,\,\ C_0^+[0,1]=C^+[0,1]\setminus
\{\theta\equiv 0\}.$$

Well then the set $C^+[0,1]$ is the cone of positive continuous
functions on $[0,1].$

We define the Hammerstein operator $H_\vartheta$ on $C[0,1]$ by
the equality

$$H_\vartheta f(t)=\int_0^1K(t,u)f^{\vartheta}(u)du=f(t), \,\ \vartheta>1.$$

 Clearly, that by the Theorem 44.8 from \cite{kz1984} we obtained
\begin{thm}\label{t1} Let $\vartheta>1$. The equation
\begin{equation}\label{e2.1}
 H_\vartheta f=f
\end{equation}
 has at least one solution in $C_0^+[0,1].$
\end{thm}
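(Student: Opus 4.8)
The plan is to realize $H_\vartheta$ as a completely continuous, cone-preserving operator and to produce a nonzero fixed point by Krasnosel'skii's cone compression--expansion theorem, which is the mechanism behind the cited Theorem 44.8 of \cite{kz1984}. First I would record the two constants $m=\min_{[0,1]^2}K>0$ and $M=\max_{[0,1]^2}K<\infty$, both attained and strictly positive since $K$ is continuous and strictly positive on the compact square. For $f\in C^+[0,1]$ the function $f^\vartheta$ is again nonnegative and continuous, and $H_\vartheta f(t)=\int_0^1K(t,u)f^{\vartheta}(u)\,du$ is continuous in $t$; thus $H_\vartheta(C^+[0,1])\subset C^+[0,1]$. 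Complete continuity follows from Arzel\`a--Ascoli: on a bounded set of $f$ the images are uniformly bounded, since $\|H_\vartheta f\|_\infty\le M\|f\|_\infty^{\vartheta}$, and equicontinuous, the latter using the uniform continuity of $K$ in its first variable.

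The key device is to pass to the Harnack-type subcone
$$P=\Big\{f\in C^+[0,1]:\ \min_{t\in[0,1]}f(t)\ge\gamma\,\|f\|_\infty\Big\},\qquad \gamma=\frac{m}{M}\in(0,1].$$
For any nonzero $f\in C^+[0,1]$ one has $\min_t H_\vartheta f(t)\ge m\int_0^1 f^{\vartheta}$ and $\|H_\vartheta f\|_\infty\le M\int_0^1 f^{\vartheta}$, whence $\min_t H_\vartheta f(t)\ge\gamma\|H_\vartheta f\|_\infty$; therefore $H_\vartheta$ maps $C_0^+[0,1]$ into $P$, and in particular $P$ into itself. On $P$ the lower bound on $f$ gives a two-sided estimate: if $\|f\|_\infty=r$ then $f(u)\ge\gamma r$ for all $u$, so
$$m\,\gamma^{\vartheta} r^{\vartheta}\ \le\ \|H_\vartheta f\|_\infty\ \le\ M\,r^{\vartheta}.$$

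Now I would exploit $\vartheta>1$. Since $M r^{\vartheta}\le r$ for all sufficiently small $r$, choosing a small radius $r_1>0$ yields $\|H_\vartheta f\|_\infty\le\|f\|_\infty$ on $\{f\in P:\|f\|_\infty=r_1\}$ (compression near $0$). Since $m\gamma^{\vartheta} r^{\vartheta}\ge r$ for all sufficiently large $r$, choosing a large radius $r_2>r_1$ yields $\|H_\vartheta f\|_\infty\ge\|f\|_\infty$ on $\{f\in P:\|f\|_\infty=r_2\}$ (expansion at $\infty$). Krasnosel'skii's cone fixed point theorem then provides a fixed point $f_*\in P$ with $r_1\le\|f_*\|_\infty\le r_2$; as $r_1>0$ this $f_*$ lies in $C_0^+[0,1]$ and solves $H_\vartheta f_*=f_*$.

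I expect the only genuine obstacle to be the lower norm estimate: a naive bound on all of $C^+[0,1]$ fails because a tall thin spike can have large sup norm yet negligible $L^{\vartheta}$-mass, so $\|H_\vartheta f\|_\infty$ cannot be controlled from below by $\|f\|_\infty$ on the full cone. Restricting to the subcone $P$ --- equivalently, using the uniform positivity of $K$ to obtain the Harnack-type bound $\min f\ge\gamma\|f\|_\infty$ for every function in the range of $H_\vartheta$ --- is exactly what repairs this and makes the expansion estimate available. Everything else is a routine verification of the hypotheses of the cone theorem.
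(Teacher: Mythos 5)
Your proof is correct, and it is essentially the paper's approach made explicit: the paper establishes Theorem \ref{t1} by nothing more than a citation of Theorem 44.8 from \cite{kz1984}, which is precisely the Krasnosel'skii cone compression/expansion mechanism you reconstruct. The one substantive ingredient beyond routine verification --- passing to the Harnack-type subcone $P$ with constant $\gamma=m/M$ so that the lower norm estimate (which fails on the full cone $C^+[0,1]$) becomes available --- is identified and handled correctly in your argument.
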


Put
$$\mathcal M_0=\left\{f\in C^+[0,1]: f(0)=1\right\}.$$

\begin{lem}\label{l.1} Let $\alpha>1$. The equation
\begin{equation}\label{e2.2}
R_{\alpha}f=f, \,\ f\in C^{+}_{0}[0,1]
\end{equation}
 has a positive solution iff the Hammerstein operator
has a positive eigenvalue, i.e. the Hammerstein equation
\begin{equation}\label{e2.3}
H_{\alpha}g=\lambda g, \,\ f\in C^{+}[0,1]
\end{equation}
has a positive solution in $\mathcal M_0$ for some $\lambda>0$.
\end{lem}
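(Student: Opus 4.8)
The plan is to establish an equivalence between fixed points of the nonlinear operator $R_{\alpha}$ and positive eigenfunctions of the Hammerstein operator $H_{\alpha}$ normalized to satisfy $g(0)=1$. The key observation is that the definition of $R_{\alpha}$ in (\ref{e1.3}) builds in a normalization by dividing by $\int_0^1 K(0,u)f(u)\,du$, which forces any value of $R_{\alpha}f$ at $t=0$ to equal $1$. So the natural target space for fixed points lives in $\mathcal M_0$, and the division is precisely what converts an eigenvalue problem (where the scaling constant $\lambda$ is free) into a fixed-point problem (where the scaling is pinned down).

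First I would prove the forward direction. Suppose $g\in\mathcal M_0$ satisfies $H_{\alpha}g=\lambda g$ for some $\lambda>0$, i.e. $\int_0^1 K(t,u)g^{\alpha}(u)\,du=\lambda g(t)$. Evaluating at $t=0$ and using $g(0)=1$ gives $\int_0^1 K(0,u)g^{\alpha}(u)\,du=\lambda$. Now I would compute $R_{\alpha}g$ by substituting: the numerator $\int_0^1 K(t,u)g(u)\,du$ does not immediately match, so the correct move is to work with the function $f=g^{\alpha}$ or to track carefully which power appears. Here the main subtlety is that $H_{\alpha}$ uses $g^{\alpha}$ inside the integral while $R_{\alpha}$ raises the whole quotient to the power $\alpha$; I would set $f(t)=g(t)$ and verify that
\begin{equation}
(R_{\alpha}g)(t)=\left(\frac{\int_0^1 K(t,u)g(u)\,du}{\int_0^1 K(0,u)g(u)\,du}\right)^{\alpha},
\end{equation}
so the natural correspondence is to take the eigenfunction to be $g$ with $H_{\alpha}$ acting as $\int K(t,u)g^{\alpha}(u)\,du$; matching the two requires identifying the fixed point $f$ of $R_{\alpha}$ with a suitable power of $g$. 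The clean statement is: set $f=g$ and observe that $g$ is a fixed point of $R_{\alpha}$ precisely when the ratio of integrals, raised to $\alpha$, returns $g$; tracing through the normalization shows $\lambda$ is absorbed exactly because both numerator and denominator scale the same way.

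For the converse, I would start from a fixed point $f\in C_0^+[0,1]$ of $R_{\alpha}$, so $f=R_{\alpha}f$. Evaluating at $t=0$ gives $f(0)=1$ automatically (the quotient equals $1$ at $t=0$, and $1^{\alpha}=1$), which already places $f$ in $\mathcal M_0$ — this is the structural reason the lemma is true. Then I would take $\alpha$-th roots or rearrange $f^{1/\alpha}(t)\cdot\int_0^1 K(0,u)f(u)\,du=\int_0^1 K(t,u)f(u)\,du$ to exhibit $f$ (or $f^{1/\alpha}$) as an eigenfunction of $H_{\alpha}$ with eigenvalue $\lambda=\left(\int_0^1 K(0,u)f(u)\,du\right)^{?}$, where the exponent must be read off from the algebra. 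Positivity and continuity of $f$, together with strict positivity of $K$, guarantee the denominators never vanish and that the resulting eigenfunction stays in $C^+[0,1]$.

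The main obstacle I anticipate is purely bookkeeping of exponents: reconciling the power $\vartheta$ (here $\alpha$) that sits inside the Hammerstein integral with the outer power $\alpha$ in $R_{\alpha}$, and correctly identifying whether the eigenfunction corresponding to a fixed point $f$ is $f$ itself, $f^{\alpha}$, or $f^{1/\alpha}$. I would resolve this by writing out both equations explicitly, introducing the substitution that links them, and computing the eigenvalue $\lambda$ as an explicit functional of the solution so that the two directions are manifestly inverse to each other. No deep functional-analytic input is needed beyond strict positivity of $K$ ensuring well-definedness; the content is the algebraic identification, so I would present it as a direct two-way verification rather than invoking any fixed-point theorem.
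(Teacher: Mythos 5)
Your overall strategy coincides with the paper's: a direct two-way algebraic verification, with the eigenvalue pinned down by evaluating at $t=0$ and using the normalization $g(0)=1$. The problem is that you never actually carry out the exponent bookkeeping, and that bookkeeping \emph{is} the entire content of the lemma; worse, at the one point where you commit to a choice, you commit to the wrong one. In the direction ``eigenfunction $\Rightarrow$ fixed point'' you conclude with ``the clean statement is: set $f=g$''. That fails: the hypothesis $H_{\alpha}g=\lambda g$ gives information only about $\int_0^1 K(t,u)g^{\alpha}(u)\,du$, whereas $R_{\alpha}g$ is built from $\int_0^1 K(t,u)g(u)\,du$, and nothing links these two integrals unless $g$ is constant. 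The correct correspondence (the one the paper uses) is $f=g^{\alpha}$: from $H_{\alpha}g=\lambda g$ and $g(0)=1$ one gets $\lambda=\int_0^1K(0,u)g^{\alpha}(u)\,du$, hence
\begin{equation*}
(R_{\alpha}g^{\alpha})(t)=\left(\frac{\int_0^1K(t,u)g^{\alpha}(u)\,du}{\int_0^1K(0,u)g^{\alpha}(u)\,du}\right)^{\alpha}=\left(\frac{\lambda g(t)}{\lambda}\right)^{\alpha}=g^{\alpha}(t),
\end{equation*}
so $f=g^{\alpha}\in C_0^{+}[0,1]$ is the desired fixed point of $R_{\alpha}$.

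In the converse direction you do write the correct rearrangement $f^{1/\alpha}(t)\int_0^1K(0,u)f(u)\,du=\int_0^1K(t,u)f(u)\,du$, but you then leave both the eigenfunction (``$f$ or $f^{1/\alpha}$'') and the eigenvalue exponent (``$\lambda=\bigl(\int_0^1K(0,u)f(u)\,du\bigr)^{?}$'') undetermined. The resolution is one line: since $\int_0^1K(t,u)f(u)\,du=\int_0^1K(t,u)\bigl(f^{1/\alpha}(u)\bigr)^{\alpha}\,du=H_{\alpha}\bigl(f^{1/\alpha}\bigr)(t)$, your displayed identity says precisely $H_{\alpha}h=\lambda h$ with $h=f^{1/\alpha}$ and $\lambda=\int_0^1K(0,u)f(u)\,du$ (exponent $1$); moreover $h(0)=f(0)^{1/\alpha}=1$, using your correct observation that any fixed point satisfies $f(0)=1$, so $h\in\mathcal M_0$. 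As submitted, the proposal is a plan that defers exactly the step the lemma asks for --- identifying which power of the solution passes between the two equations --- and its only explicit commitment on that point is incorrect; once the two identifications $h=f^{1/\alpha}$ and $f=g^{\alpha}$ are made explicit as above, your argument becomes the paper's proof.
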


\begin{proof} We define the linear operator $W$ and the linear functional
$\omega$ on the $C[0,1]$ by following equalities

$$(Wf)(t)=\int_0^1K(t,u)f(u)du, \,\,\ \omega(f)=\int_0^1K(0,u)f(u)du.$$

{\sl Necessariness.} Let $f_0\in C_0^+[0,1]$ be a solution of the
equation (\ref{e2.2}). We have
$$(Wf_0)(t)=\omega(f_0)\sqrt[\alpha]{f_0(t)}.$$
From this equality we get
$$(H_\alpha h)(t)=\lambda_0 h(t),$$
where $h(t)=\sqrt[\alpha]{f_0(t)}$ and $\lambda_0=\omega(f_0)>0$.

It is easy to see that $h\in \mathcal M_0$ and $h(t)$ is an
eigenfunction of the Hammerstein's operator $H_\alpha$,
corresponding the positive eigenvalue $\lambda_0$.

{\sl Sufficiency.}  Let $h\in \mathcal M_0$ be an eigenfunction of
the Hammerstein's operator $H_\alpha$. Then there is a number
$\lambda_0>0$ such that $H_\alpha h=\lambda_0 h$. From $h(0)=1$ we
get $\lambda_0=(H_\alpha h)(0)=\omega(h^\alpha)$. Then

$$h(t)={(H_\alpha h)(t)\over \omega(h^\alpha)}.$$
From this equality we get $R_\alpha f_0=f_0$ with $f_0=h^\alpha\in
C_0^+[0,1]$. This completes the proof. \end{proof}

\begin{thm}\label{t2} The equation (\ref{e2.2}) has at least one
solution in $C_0^+[0,1]$.
\end{thm}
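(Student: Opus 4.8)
The plan is to combine Theorem~\ref{t1} with Lemma~\ref{l.1}. By Lemma~\ref{l.1}, to solve $R_\alpha f=f$ in $C_0^+[0,1]$ it suffices to produce an eigenfunction of the Hammerstein operator $H_\alpha$ that lies in $\mathcal M_0$ and corresponds to a strictly positive eigenvalue; the sufficiency direction of that lemma then manufactures the desired fixed point $f=h^\alpha$ of $R_\alpha$. So the entire task reduces to exhibiting a positive eigenfunction of $H_\alpha$ normalized at $t=0$, working in the regime $\alpha>1$ that matches the hypotheses of both Theorem~\ref{t1} and Lemma~\ref{l.1}.

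To get a candidate, I would invoke Theorem~\ref{t1} with $\vartheta=\alpha$, which furnishes some $f_0\in C_0^+[0,1]$ with $H_\alpha f_0=f_0$. This is already an eigenfunction, but with eigenvalue $1$ and without the normalization $f_0(0)=1$. First I would verify that $f_0$ is in fact strictly positive on all of $[0,1]$: since $f_0\geq 0$, $f_0\not\equiv 0$, and the kernel $K(t,u)$ is strictly positive and continuous, the integral
$$f_0(t)=\int_0^1K(t,u)f_0^{\alpha}(u)\,du$$
is strictly positive for every $t$, and in particular $c:=f_0(0)>0$.

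The decisive step is to exploit the homogeneity of $H_\alpha$, namely $H_\alpha(\mu f)=\mu^{\alpha}H_\alpha f$ for $\mu>0$. Setting $h:=f_0/c$ gives $h(0)=1$, so $h\in\mathcal M_0$, and a direct computation shows
$$(H_\alpha h)(t)=\frac{1}{c^{\alpha}}(H_\alpha f_0)(t)=\frac{1}{c^{\alpha}}f_0(t)=c^{1-\alpha}h(t),$$
so that $h$ is an eigenfunction of $H_\alpha$ with eigenvalue $\lambda_0=c^{1-\alpha}>0$. Feeding $h$ into the sufficiency part of Lemma~\ref{l.1} yields $f=h^{\alpha}\in C_0^+[0,1]$ with $R_\alpha f=f$, which is the assertion.

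I expect the only genuine subtlety to be the strict positivity of $f_0$ (hence $c>0$), since without it the normalization $h=f_0/c$ would be undefined and the reduction to $\mathcal M_0$ would fail; this is precisely the point where the assumption that $K(t,u)$ is strictly positive is essential, and it is the same positivity that makes the integral operator improving enough for Theorem~\ref{t1} to apply. Once $f_0(0)>0$ is secured, everything else is routine bookkeeping built on Theorem~\ref{t1} and Lemma~\ref{l.1}.
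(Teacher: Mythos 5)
Your proof is correct and follows the same route the paper intends: Theorem~\ref{t1} supplies a fixed point of $H_\alpha$, and the sufficiency direction of Lemma~\ref{l.1} converts it into a fixed point of $R_\alpha$. The paper states Theorem~\ref{t2} without writing out a proof, and your normalization step (rescaling $f_0$ by $c=f_0(0)>0$, which uses the strict positivity of the kernel and the homogeneity $H_\alpha(\mu f)=\mu^{\alpha}H_\alpha f$) correctly supplies the one detail the paper leaves implicit, namely that the fixed point from Theorem~\ref{t1} must be moved into $\mathcal M_0$ before Lemma~\ref{l.1} can be applied.
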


Let $\lambda_{0}$ be a positive eigenvalue of the Hammerstein
operator $H_{\alpha}, \alpha>1.$ Then there exists $f_{0}\in
\mathcal M_0$ such that $H_{\alpha}f_{0}=\lambda_{0}f_{0}.$ Take
$\lambda\in (0,+\infty)$, $\lambda\ne\lambda_0$. Define function
$h_0(t)\in C_0^+[0,1]$ by
$$h_0(t)=\sqrt[\alpha-1]{\lambda\over \lambda_0}f_0(t), \ \ t\in [0,1].$$ Then
$$H_\alpha h_0=H_\alpha\left(\sqrt[\alpha-1]{\lambda\over \lambda_0}f_0\right)=\lambda h_0,$$

i.e. the number $\lambda$ is an eigenvalue of Hammerstein operator
$H_{\alpha}$ corresponding the eigenfunction $h_0(t)$. It can be
easily checked: if the number $\lambda_{0}>0$ is eigenvalue of the
operator $H_\alpha, \alpha>1$, then an arbitrary positive number
is eigenvalue of the operator $H_\alpha$. Therefore we have

\begin{lem}\label{1.2} $a)$Let $\alpha>1$.The equation  $R_{\alpha}f=f$ has a nontrivial
positive solution iff the Hammerstein equation $H_{\alpha}g=g$ has
a nontrivial positive solution.
\end{lem}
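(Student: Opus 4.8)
The plan is to read off both implications from Lemma \ref{l.1} together with the scaling observation recorded just before the statement, namely that as soon as $H_\alpha$ (with $\alpha>1$) admits one positive eigenvalue, every positive number becomes an eigenvalue. The whole content of the lemma is then the passage between the phrase ``for some $\lambda>0$'' in Lemma \ref{l.1} and the specific eigenvalue $\lambda=1$ appearing in $H_\alpha g=g$, handled by a rescaling of the eigenfunction and a normalization to put it into $\mathcal{M}_0$.

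First I would treat the forward implication. Assuming $R_\alpha f=f$ has a nontrivial positive solution, Lemma \ref{l.1} furnishes an eigenfunction $f_0\in\mathcal{M}_0$ with $H_\alpha f_0=\lambda_0 f_0$ for some $\lambda_0>0$. Applying the displayed scaling with target eigenvalue $1$, the function $h_0=\sqrt[\alpha-1]{1/\lambda_0}\,f_0$ satisfies $H_\alpha h_0=h_0$; since $h_0$ is a nontrivial positive function, this is exactly a nontrivial positive solution of $H_\alpha g=g$.

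For the converse I would start from a nontrivial positive solution $g$ of $H_\alpha g=g$ and normalize it. Because the kernel $K(t,u)$ is strictly positive and $g\not\equiv 0$, the number $c:=g(0)=\int_0^1 K(0,u)g^\alpha(u)\,du$ is strictly positive, so $h:=g/c$ is well defined and lies in $\mathcal{M}_0$, as $h(0)=1$. A direct computation using homogeneity of $H_\alpha$ in the integrand gives $H_\alpha h=c^{-\alpha}H_\alpha g=c^{-\alpha}g=c^{1-\alpha}h$, so $h$ is an eigenfunction in $\mathcal{M}_0$ with positive eigenvalue $\lambda=c^{1-\alpha}$. Lemma \ref{l.1} then yields a nontrivial positive solution of $R_\alpha f=f$.

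I do not anticipate a genuine obstacle, since each direction is a couple of lines once the scaling step is available. The only points demanding care are confirming that $g(0)>0$ — which is where strict positivity of $K$ is essential, guaranteeing the normalization $g/c$ makes sense and lands in $\mathcal{M}_0$ — and keeping the fractional exponents consistent in the scaling step, where the hypothesis $\alpha>1$ is precisely what makes the root $\sqrt[\alpha-1]{\,\cdot\,}$ meaningful and the eigenvalue rescaling reversible.
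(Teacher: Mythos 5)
Your proof is correct and takes essentially the same route as the paper: Lemma \ref{l.1} combined with the homogeneity observation that once $H_\alpha$ ($\alpha>1$) has one positive eigenvalue, rescaling the eigenfunction by $\sqrt[\alpha-1]{\lambda/\lambda_0}$ makes every positive number, in particular $1$, an eigenvalue. Your explicit normalization $h=g/g(0)$ in the converse, with the eigenvalue tracked as $c^{1-\alpha}$, merely spells out a step the paper leaves implicit in its remark that an eigenfunction can be taken in $\mathcal{M}_0$.
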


Let $\alpha>1$. Denote by $N_{fix.p}(H_\alpha)$ and
$N_{fix.p}(R_\alpha)$ numbers of nontrivial positive solutions of
the equations (\ref{e2.1}) and (\ref{e2.2}), respectively.

\begin{thm}\label{t3} Let $\alpha>1$. The equality $N_{fix.p}(H_\alpha)=N_{fix.p}(R_\alpha)$ is
held.
\end{thm}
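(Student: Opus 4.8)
The plan is to prove the numerical equality by constructing an explicit bijection between the nontrivial positive solutions of the two equations, factored through the intermediate set
$$E=\left\{h\in\mathcal M_0:\ H_\alpha h=\lambda h\ \text{for some}\ \lambda>0\right\}$$
of normalized eigenfunctions of the Hammerstein operator. Writing $\mathrm{Fix}(R_\alpha)$ and $\mathrm{Fix}(H_\alpha)$ for the two solution sets, it suffices to show $|\mathrm{Fix}(R_\alpha)|=|E|=|\mathrm{Fix}(H_\alpha)|$.

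First I would identify $\mathrm{Fix}(R_\alpha)$ with $E$ by reusing the construction in the proof of Lemma \ref{l.1}. If $R_\alpha f_0=f_0$, then evaluating at $t=0$ gives $f_0(0)=(R_\alpha f_0)(0)=1$, so $h:=\sqrt[\alpha]{f_0}$ lies in $\mathcal M_0$ and, by the necessity argument there, belongs to $E$ (with eigenvalue $\omega(f_0)$); conversely $h\mapsto h^\alpha$ carries $E$ back into $\mathrm{Fix}(R_\alpha)$ by the sufficiency argument. As $x\mapsto x^{1/\alpha}$ and $x\mapsto x^\alpha$ are mutually inverse on $[0,\infty)$ and act pointwise, these two maps are inverse bijections, so $|\mathrm{Fix}(R_\alpha)|=|E|$.

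Second I would identify $E$ with $\mathrm{Fix}(H_\alpha)$ by rescaling along eigenrays, exactly as in the computation preceding the theorem. For $h\in E$ with $H_\alpha h=\lambda h$, the function $g:=\lambda^{-1/(\alpha-1)}h$ satisfies $H_\alpha g=g$ because $H_\alpha(ch)=c^\alpha H_\alpha h$; in the other direction, for $g\in\mathrm{Fix}(H_\alpha)$ the function $h:=g/g(0)$ lies in $\mathcal M_0$ and satisfies $H_\alpha h=g(0)^{1-\alpha}h$, hence $h\in E$. Using $h(0)=1$ and the identity $-\tfrac{1}{\alpha-1}(1-\alpha)=1$, one verifies directly that these assignments are mutually inverse, giving $|E|=|\mathrm{Fix}(H_\alpha)|$; composing the two bijections yields the claim.

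The step requiring the most care, and where the hypothesis $\alpha>1$ is crucial, is the injectivity of the normalization $g\mapsto g/g(0)$ on $\mathrm{Fix}(H_\alpha)$. If $g_1/g_1(0)=g_2/g_2(0)$ then $g_1=c\,g_2$ for some $c>0$, and applying $H_\alpha$ gives $c\,g_2=g_1=H_\alpha g_1=c^\alpha H_\alpha g_2=c^\alpha g_2$, forcing $c^{\alpha-1}=1$ and hence $c=1$. Thus each eigenray meets $\mathcal M_0$ in exactly one point and contains exactly one fixed point of $H_\alpha$, which is what makes the three cardinalities coincide; for $\alpha=1$ this argument breaks down, consistent with the separate treatment of that case in \cite{er2010}.
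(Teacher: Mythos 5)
Your proof is correct and follows essentially the route the paper intends: Theorem \ref{t3} is stated there without a separate proof, being understood as a consequence of Lemma \ref{l.1} (the correspondence $f_0=h^\alpha$, $h=\sqrt[\alpha]{f_0}$ between fixed points of $R_\alpha$ and normalized eigenfunctions in $\mathcal M_0$) combined with the eigenvalue-rescaling computation $h_0=\sqrt[\alpha-1]{\lambda/\lambda_0}\,f_0$ preceding Lemma \ref{1.2}, which are exactly the two bijections you construct. You merely make explicit what the paper leaves implicit (mutual inverseness of the maps, $f_0(0)=1$ for fixed points of $R_\alpha$, and the role of $\alpha>1$ in the normalization $g\mapsto g/g(0)$), so this is the same argument, carried out in full.
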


Denote
$$m=\min_{t,u\in [0,1]}K(t,u), \ \ M_0=\max_{u\in [0,1]}K(0,u),$$
$$M=\max_{t,u\in [0,1]}K(t,u), \ \ m_0=\min_{u\in [0,1]}K(0,u).$$

\begin{thm}\label{t4} Let $\alpha>1.$ If the following inequality
holds

$$\left(\frac{M}{m_0}\right)^{\alpha}-\left(\frac{m}{M_0}\right)^{\alpha}<\frac{1}{\alpha}$$\\

then the homogenous Hammerstein equation (\ref{e2.1}) and the
equation (\ref{e2.2}) has unique nontrivial positive solution.
\end{thm}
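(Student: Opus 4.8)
The plan is to prove uniqueness only for the operator $R_\alpha$. By Theorem \ref{t3} the number of nontrivial positive solutions of $H_\alpha g=g$ equals the number for $R_\alpha f=f$, so once $R_\alpha$ is shown to have a unique fixed point — existence being already guaranteed by Theorem \ref{t2} — the homogeneous Hammerstein equation (\ref{e2.1}) inherits uniqueness as well. The first observation is that $R_\alpha$ is homogeneous of degree $0$ and $(R_\alpha f)(0)=1$ for every $f\in C_0^+[0,1]$, so every fixed point lies in $\mathcal M_0$. Writing $A(f)=Wf/\omega(f)$ with $W$ and $\omega$ as in the proof of Lemma \ref{l.1}, one has $R_\alpha f=A(f)^\alpha$, and since $m/M_0\le A(f)(t)\le M/m_0$, the image of $R_\alpha$ is contained in the closed bounded convex set $\mathcal S=\{f\in\mathcal M_0:(m/M_0)^\alpha\le f(t)\le (M/m_0)^\alpha\}$. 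Thus $R_\alpha$ maps $\mathcal S$ into itself and all fixed points of (\ref{e2.2}) lie in $\mathcal S$.

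The core of the argument is to show that $R_\alpha\colon\mathcal S\to\mathcal S$ is a contraction in the sup-norm with constant $\alpha\big[(M/m_0)^\alpha-(m/M_0)^\alpha\big]$, which is $<1$ precisely under the stated hypothesis; uniqueness then follows from the Banach fixed point theorem, since $\mathcal S$ is a closed subset of $C[0,1]$ and hence a complete metric space. To obtain the estimate I would fix $f,g\in\mathcal S$, put $d=f-g$ and $f_s=g+sd$ for $s\in[0,1]$, and write $(R_\alpha f)(t)-(R_\alpha g)(t)=\int_0^1\frac{d}{ds}A(f_s)(t)^\alpha\,ds$. Differentiating, the power rule produces the factor $\alpha A(f_s)(t)^{\alpha-1}\le\alpha(M/m_0)^{\alpha-1}$, while the quotient rule gives
\[
\frac{d}{ds}A(f_s)(t)=\frac{1}{\omega(f_s)}\int_0^1 d(u)\big[K(t,u)-A(f_s)(t)K(0,u)\big]\,du .
\]

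The decisive tool is the identity $\int_0^1\big[K(t,u)-A(f_s)(t)K(0,u)\big]f_s(u)\,du=0$, which is immediate from the definition $A(f_s)(t)=Wf_s(t)/\omega(f_s)$. It forces the weight $K(t,u)-A(f_s)(t)K(0,u)$ to change sign, so that, after using this $f_s$-orthogonality relation together with the bounds $f_s\ge(m/M_0)^\alpha$ and $\omega(f_s)\ge m_0(m/M_0)^\alpha$, the inner integral is controlled by the oscillation of $A$ rather than by a crude pointwise bound; the cancellation is what is meant to collapse the whole estimate to the clean constant $\alpha\big[(M/m_0)^\alpha-(m/M_0)^\alpha\big]$. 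I expect this final step to be the main obstacle: the naive bounds on $\int_0^1\big|K(t,u)-A(f_s)(t)K(0,u)\big|\,du$ and on $\omega(f_s)^{-1}$ are lossy, and one must exploit the orthogonality efficiently and combine it with the mean-value factor $\alpha A(f_s)(t)^{\alpha-1}$ at an intermediate point so that the constant sharpens to exactly the difference of $\alpha$-th powers appearing in the hypothesis, rather than to a messier expression in $m,M,m_0,M_0$. Once this contraction constant is in hand, the conclusion is immediate.
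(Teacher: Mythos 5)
Your preparatory reductions are sound: $(R_\alpha f)(0)=1$ for every $f\in C_0^+[0,1]$, so every fixed point lies in $\mathcal M_0$ and, by the bounds $m/M_0\le A(f)(t)\le M/m_0$, in your set $\mathcal S$; and Theorem \ref{t3} transfers uniqueness from (\ref{e2.2}) to (\ref{e2.1}). The genuine gap is exactly the step you flagged, and it cannot be closed, because the claim is false: $R_\alpha$ is in general \emph{not} a sup-norm contraction on $\mathcal S$ with constant $\alpha\bigl[(M/m_0)^\alpha-(m/M_0)^\alpha\bigr]$, nor with any constant $<1$, even when the hypothesis of Theorem \ref{t4} holds. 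Concretely, take $\alpha=1.01$, $\epsilon=1/50$, $\rho=3/4$, and a continuous kernel with $K(0,u)\equiv 1$ and, for $t$ bounded away from $0$, $K(t,u)=\epsilon$ for $u<\rho$ and $K(t,u)=1$ for $u>\rho$ (smoothed near $t=0$ and $u=\rho$ on sets of negligible measure). Then $m_0=M_0=M=1$, $m=\epsilon$, the hypothesis holds since $1-\epsilon^{1.01}\approx 0.981<1/1.01\approx 0.990$, and your claimed contraction constant is $\approx 0.991$. Up to negligible corrections the fixed point equals $f^*=A_1^{\alpha}$ away from $t=0$, where $A_1=\epsilon\rho+(1-\rho)=0.265$, and $\omega(f^*)=A_1^{\alpha}$. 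Now take $d$ with $d(0)=0$, $d=-1$ on $(0,\rho)$, $d=+1$ on $(\rho,1)$, so $\|d\|=1$. Your own formulas give, for $t$ away from $0$,
\[
\frac{d}{ds}\Big|_{s=0}\bigl(A(f^*+sd)(t)\bigr)^{\alpha}
=\alpha A_1^{\alpha-1}\,\frac{\int_0^1\bigl[K(t,u)-A_1\bigr]d(u)\,du}{\omega(f^*)}
=\frac{2\alpha\rho(1-\rho)(1-\epsilon)}{A_1}\approx 1.40,
\]
using $\int_0^1[K(t,u)-A_1]d(u)\,du=\rho(A_1-\epsilon)+(1-\rho)(1-A_1)=2\rho(1-\rho)(1-\epsilon)$. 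Since $f^*+sd\in\mathcal S$ for small $s>0$, the Lipschitz constant of $R_\alpha$ on $\mathcal S$ is at least about $1.40>1$. The $f_s$-orthogonality you invoke cannot rescue this: it removes only the component of $d$ along the single direction $f_s$, and the worst case is precisely an oscillating $d$ whose component in that direction essentially vanishes, so no choice of $c$ in $d-cf_s$ improves the bound.

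So it is the approach, not just the final estimate, that must change. The paper itself gives no self-contained proof; it states that the proof is analogous to the case $\alpha=k\in\mathbb{N}$, $k\geq 2$, treated in \cite{ehr2013}. The essential difference between that route and yours is that one must estimate $\|f_1-f_2\|$ for two \emph{exact} solutions of the fixed point equation, using the equation for both functions, rather than prove a Lipschitz bound valid for arbitrary pairs in $\mathcal S$; the example above shows these are genuinely different requirements, since there the conclusion of the theorem (a unique fixed point, with vanishing second iterate of the derivative) coexists with the absence of any sup-norm contraction on $\mathcal S$. A repaired write-up should therefore start from two solutions $f_1,f_2\in\mathcal S$ of (\ref{e2.2}) and aim at $\|f_1-f_2\|\le \alpha\bigl[(M/m_0)^{\alpha}-(m/M_0)^{\alpha}\bigr]\,\|f_1-f_2\|$; your mean-value factor and the kernel-ratio bound $|K(t,u)/K(0,u)-A_i(t)|\le M/m_0-m/M_0$ are usable there, but the fixed-point property of both functions must be exploited to control $1/\omega(f_i)$, otherwise one picks up the lossy factor $(M_0/m)^{\alpha}$ that, as the example shows, destroys any contraction-type conclusion.
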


Analogous theorem was proved for $\alpha=k\in\mathbb{N}, k\geq2$
in \cite{ehr2013} and proof of the Theorem \ref{t4} is analogously
to its.

\section{Existence finite positive solutions of homogeneous  Hammerstein equation}

In this section we'll show the existence of $n\in \mathbb{N}$
positive solutions of homogeneous integral equation of Hammerstein
type (\ref{e1.2}).

For all $p,n\in \mathbb{N}$ we define following matrices:

 \begin{equation}\label{e1}{\mathbf A}_{n}^{(p)}=\left\{\frac{1}{2(2p+i+j)-3}\left(\frac{1}{2}\right)^{2(2p+i+j-2)}
\right\}_{i,j=\overline{1,n}}, \,\ n,p\in
\mathbb{N}.\end{equation}

\begin{equation}\label{e2}
{\mathbf
B}[a_{1},...,a_{n};b_{1},...b_{n}]=\left(\frac{1}{a_{i}+b_{j}}\right)_{i,j=\overline{1,n}},
\,\ a_{i}, b_{j}>0.\end{equation}

\begin{equation}\label{e3}
{\mathbf
C}_{n}^{(p)}=B[4p,4(p+1),...,2(p+n-1);1,5,...,4n-3].\end{equation}

\begin{lem}\label{l3}\cite{pr1978} Let $n\geq 2.$ Then

 $$\det{\mathbf B}[a_{1},...,a_{n};b_{1},...,b_{n}]=\frac{\prod_{1\leq i<j\leq n}[(a_{i}-a_{j})
 (b_{i}-b_{j})]}{\prod_{i,j=1}^{n}(a_{i}+b_{j})}$$
\end{lem}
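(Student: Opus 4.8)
The plan is to prove the Cauchy determinant identity by induction on $n$, using elementary column and row operations to peel off one index at a time. The base case $n=2$ is a direct computation: combining the two terms of the $2\times 2$ determinant over the common denominator $(a_1+b_1)(a_1+b_2)(a_2+b_1)(a_2+b_2)$ yields a numerator that factors as $(a_1-a_2)(b_1-b_2)$, matching the claimed formula.

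For the inductive step, assume the result for $n-1$ and consider the $n\times n$ matrix $\mathbf{B}=(1/(a_i+b_j))$. First I would subtract the last column from each preceding column; the $(i,j)$ entry for $j<n$ becomes
$$\frac{1}{a_i+b_j}-\frac{1}{a_i+b_n}=\frac{b_n-b_j}{(a_i+b_j)(a_i+b_n)},$$
while the last column is unchanged. Factoring $1/(a_i+b_n)$ out of each row $i$ and $(b_n-b_j)$ out of each column $j<n$ pulls out $\prod_{i=1}^n (a_i+b_n)^{-1}\prod_{j=1}^{n-1}(b_n-b_j)$, leaving a matrix whose last column is all $1$'s and whose remaining entries are $1/(a_i+b_j)$.

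Next I would subtract the last row from every preceding row, which annihilates the last column above the $(n,n)$ entry and turns the $(i,j)$ entry for $i,j<n$ into $\frac{a_n-a_i}{(a_i+b_j)(a_n+b_j)}$; expanding along the last column reduces the problem to an $(n-1)\times(n-1)$ determinant. Factoring $(a_n-a_i)$ out of row $i$ and $1/(a_n+b_j)$ out of column $j$ then exhibits the Cauchy determinant of order $n-1$, to which the induction hypothesis applies.

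Collecting all extracted factors against the $(n-1)$-determinant, the denominators assemble into $\prod_{i,j=1}^n (a_i+b_j)$ and the numerators into $\prod_{1\le i<j\le n}(a_i-a_j)(b_i-b_j)$. The step I expect to require the most care is the sign bookkeeping: the extracted factors appear as $\prod_{j<n}(b_n-b_j)$ and $\prod_{i<n}(a_n-a_i)$ rather than the ascending differences $b_j-b_n$ and $a_i-a_n$ occurring in the target product, so one must check that the two sign reversals cancel (each contributes $(-1)^{n-1}$) and that the reindexing of the $(n-1)$-block is consistent. An alternative I would keep in reserve is the partial-fractions argument: regard $\det\mathbf{B}$ as a rational function of $a_n$ with simple poles only at $a_n=-b_j$, observe that it vanishes whenever $a_n=a_i$ for $i<n$, and pin it down by matching residues --- but the row/column reduction is the most transparent route.
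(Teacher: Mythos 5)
Your proof is correct, but there is nothing in the paper to compare it against: the paper states this lemma with a citation to Proskuryakov's problem book \cite{pr1978} and gives no proof at all. Your argument is the standard inductive proof of the Cauchy determinant identity, and every step checks out: the base case $n=2$ computation is right; the column subtraction produces the entries $\frac{b_n-b_j}{(a_i+b_j)(a_i+b_n)}$; the extraction of $\prod_{i=1}^{n}(a_i+b_n)^{-1}\prod_{j=1}^{n-1}(b_n-b_j)$, the row subtraction giving $\frac{a_n-a_i}{(a_i+b_j)(a_n+b_j)}$, and the final extraction of $\prod_{i=1}^{n-1}(a_n-a_i)\prod_{j=1}^{n-1}(a_n+b_j)^{-1}$ are all correct; and the denominators do reassemble into $\prod_{i,j=1}^{n}(a_i+b_j)$. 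Your sign bookkeeping is also right: the products $\prod_{i<n}(a_n-a_i)$ and $\prod_{j<n}(b_n-b_j)$ each differ from the ascending-order conventions by $(-1)^{n-1}$, and the two factors cancel, so the numerator matches $\prod_{1\leq i<j\leq n}(a_i-a_j)(b_i-b_j)$ exactly as the paper states it. It is worth noting that your row/column elimination technique is precisely the kind of manipulation the paper itself performs in its proof of Lemma 4 (the formula for the entries of $\mathbf{B}^{-1}$), so your supplied proof fits naturally into the paper's toolkit and makes the exposition self-contained where the authors chose to rely on a reference.
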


 \begin{cor}\label{c1}
 $\det {\mathbf A}_{n}^{(p)}=\left(\frac{1}{2}\right)^{2n(2p+n-1)}\det {\mathbf C}_{n}^{(p)}.$
 \end{cor}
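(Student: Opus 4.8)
The plan is to derive the identity directly from the multilinearity of the determinant, with Lemma~\ref{l3} entering only to recognise that the surviving matrix is a Cauchy matrix. First I would simplify the generic entry of $\mathbf A_n^{(p)}$. Since $2(2p+i+j)-3=4p+2i+2j-3$ and $2(2p+i+j-2)=4p+2i+2j-4$, the $(i,j)$ entry is
$$\left(\mathbf A_n^{(p)}\right)_{ij}=\frac{1}{4p+2i+2j-3}\left(\frac12\right)^{4p+2i+2j-4}.$$
The rational factor $\frac{1}{4p+2i+2j-3}$ is exactly the $(i,j)$ entry of $\mathbf C_n^{(p)}$; indeed, writing the defining parameter lists of $\mathbf C_n^{(p)}$ as $a_i,b_j$ one checks $a_i+b_j=4p+2i+2j-3$, so that $\left(\mathbf C_n^{(p)}\right)_{ij}=\frac{1}{a_i+b_j}$. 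Thus each entry of $\mathbf A_n^{(p)}$ equals the corresponding entry of $\mathbf C_n^{(p)}$ multiplied by a power of $\tfrac12$.

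The decisive point is that the exponent is affine in $i$ and $j$ separately, so it splits as $4p+2i+2j-4=(2i+c)+(2j+d)$ with $c+d=4p-4$; that is, into a row-part depending only on $i$ and a column-part depending only on $j$. Consequently I would extract $\left(\tfrac12\right)^{2i+c}$ from the $i$-th row and $\left(\tfrac12\right)^{2j+d}$ from the $j$-th column. By multilinearity of the determinant in its rows and then in its columns, every such scalar comes outside the determinant, and the matrix that remains is precisely $\mathbf C_n^{(p)}$. Hence $\det \mathbf A_n^{(p)}$ equals $\det \mathbf C_n^{(p)}$ times the product of all the extracted scalars.

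It then remains only to tally the total power of $\tfrac12$ that has been pulled out, namely
$$\sum_{i=1}^n(2i+c)+\sum_{j=1}^n(2j+d)=2n(n+1)+n(c+d)=2n(n+1)+n(4p-4)=2n(2p+n-1),$$
which yields $\det \mathbf A_n^{(p)}=\left(\tfrac12\right)^{2n(2p+n-1)}\det \mathbf C_n^{(p)}$, as claimed. There is no genuine obstacle here: the splitting of the exponent is forced by its affine form, and the one place demanding care is the arithmetic of this exponent sum together with the routine verification that the parameter lists defining $\mathbf C_n^{(p)}$ do reproduce the denominators $4p+2i+2j-3$.
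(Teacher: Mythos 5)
Your proof is correct and follows essentially the same route as the paper: both pass from $\mathbf{A}_n^{(p)}$ to the Cauchy matrix $\mathbf{C}_n^{(p)}$ with entries $\frac{1}{4p+2i+2j-3}$ (the reading of (\ref{e3}) you adopt, and the one the paper itself uses when proving Remark \ref{r1}, the displayed parameter lists in (\ref{e3}) containing typos) by rescaling every row and every column by a power of $\frac{1}{2}$ and invoking multilinearity of the determinant. If anything, your explicit tally $2n(n+1)+n(4p-4)=2n(2p+n-1)$ is more careful than the paper's own proof, whose stated multipliers --- $2^{2(p+j-1)}$ on column $j$ but only $2^{2(i-1)}$ on row $i$ --- leave a stray factor of $2^{-2p}$ in every entry and would yield the exponent $2n(p+n-1)$; the row multiplier there should read $2^{2(p+i-1)}$.
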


 \begin{proof} Let $i,j=\overline{1,n}.$ We multiply by
 $2^{2(p+j-1)}$ the column $j$ of the matrix ${\mathbf A}_{n}^{(p)}$ and
 then multiply by $2^{2(i-1)}$ the row $i$ of the matrix obtained.
 As a result we get ${\mathbf C}_{n}^{(p)}.$
\end{proof}

 \begin{lem}\label{l4} Let
 ${\mathbf B}^{-1}[a_{1},a_{2},...,a_{n};b_{1},b_{2},...,b_{n}]=\{\beta_{ij}\}_{i,j=\overline{1,n}}$
 \ is an inverse matrix of\\
 ${\mathbf B}[a_{1},a_{2},...,a_{n};b_{1},b_{2},...,b_{n}].$ Then

  $$\beta_{ji}=\frac{\prod_{s=1}^{n}(a_{s}+b_{j})\prod_{s=1,s\neq i}^{n}(a_{i}+b_{s})}
  {\prod_{s=1,s\neq j}^{n}(b_{j}-b_{s})\prod_{s=1,s\neq i}^{n}(a_{i}-a_{s})}$$
\end{lem}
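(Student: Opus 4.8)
The plan is to prove this by the adjugate (Cramer) formula, reducing every determinant that appears to the Cauchy product already computed in Lemma~\ref{l3}. Write $\mathbf B=\mathbf B[a_1,\dots,a_n;b_1,\dots,b_n]$. Since $\beta_{ji}$ is the $(j,i)$ entry of $\mathbf B^{-1}$, we have $\beta_{ji}=(-1)^{i+j}M_{ij}/\det\mathbf B$, where $M_{ij}$ is the minor obtained by striking row $i$ and column $j$. The key observation is that $M_{ij}$ is again a Cauchy determinant: deleting row $i$ removes the parameter $a_i$ and deleting column $j$ removes $b_j$, so $M_{ij}$ is the determinant of a Cauchy matrix built on the parameter sets $\{a_s:s\neq i\}$ and $\{b_s:s\neq j\}$. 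Thus Lemma~\ref{l3} evaluates both $\det\mathbf B$ and $M_{ij}$ in closed form, and the whole lemma reduces to simplifying their quotient.

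First I would compare the denominators. The denominator of $\det\mathbf B$ is $\prod_{s,t=1}^n(a_s+b_t)$, while that of $M_{ij}$ is $\prod_{s\neq i,\,t\neq j}(a_s+b_t)$, so their ratio is the product of exactly those factors with $s=i$ or $t=j$, namely $\prod_{t=1}^n(a_i+b_t)\cdot\prod_{s=1}^n(a_s+b_j)/(a_i+b_j)$, the single factor $a_i+b_j$ being divided out since it is counted in both families. This already produces the factor $\prod_{s=1}^n(a_s+b_j)$ together with $\prod_{s\neq j}(a_i+b_s)$. Next I would compare the Vandermonde-type numerators. Restricting $\prod_{1\le s<t\le n}(a_s-a_t)$ to indices $\neq i$ deletes precisely the factors containing $i$, i.e. $\prod_{t>i}(a_i-a_t)$ and $\prod_{s<i}(a_s-a_i)$; reversing the sign of the latter group reassembles these as $(-1)^{i-1}\prod_{s\neq i}(a_i-a_s)$, and symmetrically the $b$-differences contribute $(-1)^{j-1}\prod_{s\neq j}(b_j-b_s)$ to the denominator.

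Assembling the pieces, the three sign sources multiply to $(-1)^{i+j}(-1)^{i-1}(-1)^{j-1}=+1$, and one is left with
\[
\beta_{ji}=\frac{\prod_{s=1}^{n}(a_{s}+b_{j})\,\prod_{s\neq j}(a_{i}+b_{s})}{\prod_{s\neq j}(b_{j}-b_{s})\,\prod_{s\neq i}(a_{i}-a_{s})},
\]
which is the asserted identity. I expect the main obstacle to be exactly this sign-and-index bookkeeping rather than any conceptual difficulty: the four surviving products are governed by three different exclusion indices, and in particular the complementary numerator factor coming from the denominator ratio omits the deleted column index $j$ (not $i$), a point that is easy to mis-assign. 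A safe way to pin down all conventions is to first invert the $2\times2$ Cauchy matrix directly, confirm the formula there, and only then run the general cofactor computation; once the $n=2$ signs are fixed, the general reduction is routine and uses nothing beyond Lemma~\ref{l3} and Cramer's rule.
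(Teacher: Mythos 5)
Your argument is correct, but it is not the paper's argument, and it is worth recording both facts. The paper never invokes Lemma~\ref{l3} in its proof of this lemma. Instead it relates $\det{\mathbf B}$ to its $(i,j)$ cofactor by elementary operations: subtract column $j$ from every other column and pull out $\prod_{s\neq j}(b_j-b_s)\big/\prod_{s}(a_s+b_j)$, leaving a matrix whose $j$th column is all $1$'s; then subtract row $i$ from every other row, so that column $j$ becomes the $i$th unit vector, and pull out $\prod_{s\neq i}(a_i-a_s)$ from the rows and the reciprocals $1/(a_i+b_t)$, $t\neq j$, from the remaining columns; finally divide, using $\beta_{ji}=\det{\mathbf B}^{(i,j)}/\det{\mathbf B}$. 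That route needs no sign bookkeeping at all, since $(-1)^{i+j}$ is absorbed into the cofactor and the difference factors come out already in the orientations $(b_j-b_s)$, $(a_i-a_s)$. Your route instead evaluates both $\det{\mathbf B}$ and the minor $M_{ij}$ in closed form by Lemma~\ref{l3} (the minor being again a Cauchy determinant on $\{a_s\}_{s\neq i}$, $\{b_s\}_{s\neq j}$) and simplifies the quotient; the price is the sign count $(-1)^{i+j}(-1)^{i-1}(-1)^{j-1}=+1$, which you carried out correctly.

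The index you warned about (``the complementary numerator factor omits the deleted column index $j$, not $i$'') is exactly where your conclusion and the printed lemma part ways: you obtained $\prod_{s\neq j}(a_i+b_s)$ in the numerator, while the lemma as stated --- and the paper's own proof, which factors $1/(a_i+b_t)$ out of the columns $t\neq j$ but then records that product as $\prod_{s\neq i}(a_i+b_s)$ --- carries the exclusion $s\neq i$. The two versions differ by the factor $(a_i+b_j)/(a_i+b_i)$ whenever $i\neq j$, so at most one can be right, and it is yours. The $n=2$ check you proposed settles it: for $j=1$, $i=2$, Cramer's rule gives
$$\beta_{12}=\frac{(a_1+b_1)(a_2+b_1)(a_2+b_2)}{(a_2-a_1)(b_1-b_2)},$$
which is your formula, whereas the printed formula would put $(a_2+b_1)^2$ in place of $(a_2+b_1)(a_2+b_2)$; your version also agrees with the classical Cauchy-inverse formula. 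So the only misstatement in your writeup is the closing claim that your display ``is the asserted identity'': it is the corrected identity. The lemma as printed contains an index slip, which moreover propagates into the expression for $\alpha_{ji}$ in Remark~\ref{r1}.
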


  \begin{proof} Subtracting the $j$th column of
  ${\mathbf B}[a_{1},a_{2},...,a_{n};b_{1},b_{2},...,b_{n}]$ from every
  other column we get a following equality

   $$\det{\mathbf B}[a_{1},a_{2},...,a_{n};b_{1},b_{2},...,b_{n}]=$$

   $$=\frac{\prod_{s=1,s\neq
   j}^{n}(b_{j}-b_{s})}{\prod_{s=1}^{n}(a_{s}+b_{j})} \left(
\begin{array}{ccccccc}
\frac{1}{a_{1}+b_{1}}&...&\frac{1}{a_{1}+b_{j-1}}&1&\frac{1}{a_{1}+b_{j+1}}&...&\frac{1}{a_{1}+b_{n}}\\[3mm]
\frac{1}{a_{2}+b_{1}}&...&\frac{1}{a_{2}+b_{j-1}}&1&\frac{1}{a_{2}+b_{j+1}}&...&\frac{1}{a_{2}+b_{n}}\\[3mm]
&...& &...& &...&\\[3mm]
\frac{1}{a_{n}+b_{1}}&...&\frac{1}{a_{n}+b_{j-1}}&1&\frac{1}{a_{n}+b_{j+1}}&...&\frac{1}{a_{n}+b_{n}}\\[3mm]
\end{array}
\right).$$\\
 Now we subtract from the $j$th row the $i$th row for every $j\in\{1,2...,i-1,i+1,...n\}.$
  Then

  $$\det {\mathbf B}[a_{1},...,a_{n};b_{1},...,b_{n}]=\frac{\prod_{s=1,s\neq j}^{n}(b_{j}-b_{s})\prod_{s=1,s\neq i}^{n}(a_{i}-a_{s})}
  {\prod_{s=1}^{n}(a_{s}+b_{j})\prod_{s=1,s\neq i}^{n}(a_{i}+b_{s})} \ \times\ {\det\mathbf
  B}^{(i,j)}[a_{1},...,a_{n};b_{1},...,b_{n}]$$\\

where ${\mathbf B}^{(j,i)}[a_{1},...,a_{n};b_{1},...,b_{n}]$ is
the cofactor of the element $\frac{1}{a_{i}+a_{j}}$ in ${\mathbf
B}[a_{1},...,a_{n};b_{1},...,b_{n}].$

Since $$\beta_{ji}=\frac{{\det\mathbf
B}^{(i,j)}[a_{1},...,a_{n};b_{1},...,b_{n}]}{\det {\mathbf
B}[a_{1},...,a_{n};b_{1},...,b_{n}]}.$$

This completes the proof.
\end{proof}
 Let be

$$({\mathbf A}_{n}^{(p)})^{-1}=\{\alpha_{ij}\}_{i,j\in\overline{1,n}}.$$

\begin{rem}\label{r1} For each $\alpha_{ji}$ element of $({\mathbf A}_{n}^{(p)})^{-1}$ the following equality holds

$$\alpha_{ji}=4^{2p+i+j-n+1}\cdot \frac{\prod_{s=1}^{n}(4p+2s+2j-3)\prod_{s=1,s\neq j}^{n}(4p+2s+2j-3)}
{\prod_{s=1,s\neq j}^{n}(j-s)\prod_{s=1,s\neq i}^{n}(i-s)}$$

\end{rem}

\begin{proof} By Corollary \ref{c1} and Lemma \ref{l4} we get

 $$\alpha_{ji}=4^{2p+i+j}\cdot\frac{\det{\mathbf B}^{(i,j)}[4p,4p+2,...4p+2(n-1);1,3,...2n-1]}{\det{\mathbf B}[4p,4p+2,...4p+2(n-1);1,3,...2n-1]}=$$

$$=4^{2p+i+j}\cdot\frac{\prod_{s=1}^{n}(4p+2s+2j-3)\prod_{s=1,s\neq i}^{n}(4p+2s+2i-3)}
{\prod_{s=1,s\neq j}^{n}(2j-2s)\prod_{s=1,s\neq i}^{n}(2i-2s)}.$$

\end{proof}

Denote

$$\varphi_{(s,n,p)}(u)=\alpha_{s1}u^{2p-1}+...+\alpha_{sn}u^{2(n+p)-3},\ \ s,n,p\in \mathbb{N}, \  u\in [0,1].$$

$$K_{(n,p)}(t,u;k)=1+\sum_{s=1}^{n}\left(\sqrt[k]{1+t^{2(p+s)-1}}-1\right)\varphi_{(s,n,p)}(u), \ \ k\in \mathbb{N},k\geq 2, \  t,u\in [0,1]. $$

\begin{rem}\label{r2} For the given $k\in \mathbb{N}, k\geq 2$ the following
inequality holds
$$K_{(n,p)}\left(t-\frac{1}{2}, u-\frac{1}{2}; k\right)\leq K_{(n,1)}\left(t-\frac{1}{2}, u-\frac{1}{2}; k\right), \ \ (t,u)\in [0,1]^{2}, n, p\in \mathbb{N}.$$
\end{rem}
Put
$$\zeta_0(n)=\frac{64}{9}\cdot\frac{4^{n}-1}{4n+1}\left(\frac{(4n+1)!!}{(n-1)!(2n+1)!!}\right)^{2}.$$\\

\begin{lem}\label{lm1} Let $n\in \mathbb{N}$. If  $k\geq \zeta_0(n)$
then the following inequality holds

$$K_{(n,p)}\left(t-\frac{1}{2},u-\frac{1}{2};k\right)>0, \ (t,u)\in[0,1]^{2}, p\in \mathbb{N}.$$
\end{lem}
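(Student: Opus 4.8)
My plan is to prove positivity by controlling the deviation of the kernel from its constant part $1$. Writing $x=t-\tfrac12$ and $y=u-\tfrac12$, both ranging over $[-\tfrac12,\tfrac12]$, it suffices to show that
$$\Bigl|\sum_{s=1}^{n}\Bigl(\sqrt[k]{1+x^{2(p+s)-1}}-1\Bigr)\varphi_{(s,n,p)}(y)\Bigr|<1,$$
for then $K_{(n,p)}(x,y;k)=1+(\text{that sum})>0$. For the radical factor I would use the mean value theorem: since $(1+a)^{1/k-1}\le 2^{1-1/k}<2$ for $a\in[-\tfrac12,\tfrac12]$, one gets $\bigl|\sqrt[k]{1+a}-1\bigr|\le \tfrac{2}{k}|a|$, and hence, with $a=x^{2(p+s)-1}$ and $|x|\le\tfrac12$,
$$\Bigl|\sqrt[k]{1+x^{2(p+s)-1}}-1\Bigr|\le \frac{2}{k}\Bigl(\tfrac12\Bigr)^{2(p+s)-1}=\frac{1}{k}\Bigl(\tfrac14\Bigr)^{p+s-1}.$$

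For the polynomial factor I would substitute the explicit entries $\alpha_{sj}$ of $({\mathbf A}_{n}^{(p)})^{-1}$ furnished by Remark \ref{r1} and bound, on $[-\tfrac12,\tfrac12]$,
$$|\varphi_{(s,n,p)}(y)|\le\sum_{j=1}^{n}|\alpha_{sj}|\Bigl(\tfrac12\Bigr)^{2(p+j)-3}.$$
Multiplying the two estimates and summing over $s$ reduces the lemma to verifying the numerical inequality
$$\frac{1}{k}\sum_{s=1}^{n}\Bigl(\tfrac14\Bigr)^{p+s-1}\sum_{j=1}^{n}|\alpha_{sj}|\Bigl(\tfrac12\Bigr)^{2(p+j)-3}\le 1$$
as soon as $k\ge\zeta_0(n)$. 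The point of the construction is that the factor $4^{2p}$ carried by $|\alpha_{sj}|$ is matched by the two decaying factors $(\tfrac14)^{p+s-1}$ and $(\tfrac12)^{2(p+j)-3}$, so that after cancellation the left-hand side is, up to the $1/k$, a ratio of products of half-integers.

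The decisive step is to evaluate that ratio in closed form. Using the formula of Remark \ref{r1}, each $|\alpha_{sj}|$ is a quotient of the products $\prod_{r}(4p+2r+2j-3)$ and $\prod_{r}(4p+2r+2i-3)$ by the Vandermonde-type products $\prod_{r\neq j}(2j-2r)$ and $\prod_{r\neq i}(2i-2r)$; recognising these as ratios of double factorials and telescoping the geometric sum $\sum_{s=1}^{n}4^{s-1}=\tfrac{4^{n}-1}{3}$ is what should collapse the double sum into $\tfrac{64}{9}\cdot\tfrac{4^{n}-1}{4n+1}\bigl(\tfrac{(4n+1)!!}{(n-1)!\,(2n+1)!!}\bigr)^{2}$. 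I expect the obstacle to be twofold. First, the bookkeeping with the alternating signs of $\alpha_{sj}$ must be done without arithmetic slips, since it is exactly the cancellation among these signed products that keeps $|\varphi_{(s,n,p)}|$ from being overestimated. Second, and more seriously, the term-by-term bound above still carries $p$-dependence through $\alpha_{sj}$, whereas $\zeta_0(n)$ is $p$-free; the monotonicity recorded in Remark \ref{r2} must therefore be invoked to reduce the family of estimates to a single value of $p$ before the constant $\zeta_0(n)$ can be read off. Pinning down that reduction rigorously, rather than merely term-by-term, is the heart of the argument.
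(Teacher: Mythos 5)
Your opening reduction is exactly the paper's: write the kernel as $1$ plus a perturbation, bound the perturbation in modulus by something of the form $\frac{C}{k}$, and read off the threshold for $k$. Your elementary estimates (the mean-value bound for $\sqrt[k]{1+a}-1$, the triangle-inequality bound for $|\varphi_{(s,n,p)}|$) are also the ones the paper uses. But from that point on your proposal is a plan rather than a proof, and the plan points in a direction that does not lead to $\zeta_0(n)$. The paper never evaluates the double sum $\sum_{s}\sum_{j}|\alpha_{sj}|(\tfrac12)^{2(p+j)-3}(\tfrac14)^{p+s-1}$ in closed form, and no cancellation among the signs of the $\alpha_{sj}$ is used anywhere --- every entry enters only through its absolute value. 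The actual mechanism is cruder: specializing Remark \ref{r1} to $p=1$, one computes the ratios of adjacent entries
$$\left|\frac{\alpha_{i,j+1}}{\alpha_{i,j}}\right|=\frac{4(4j+1)(2j+2n+3)}{(2j+3)(4j+5)},\qquad
\left|\frac{\alpha_{i+1,j}}{\alpha_{i,j}}\right|=\frac{4(n-i)(2i+2n+3)}{i(2i+3)},$$
concludes that $\max_{i,j}|\alpha_{ij}|=|\alpha_{nn}|$, and then bounds the whole perturbation by $\frac{2}{3}|\alpha_{nn}|\cdot\frac{1}{k}\sum_{s=1}^{n}(\tfrac12)^{2s+1}$; substituting the explicit value of $|\alpha_{nn}|$ and summing the geometric series produces exactly $1-\zeta_0(n)/k$. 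The identification of the maximal entry is the step your sketch is missing, and without it there is no route from your double sum to the constant $\zeta_0(n)$: that constant is the outcome of this particular over-estimate, not of an exact double-factorial identity, so the ``collapse'' you hope for need not occur.

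Two further points. First, your uniform constant $\frac{2}{k}$ (needed because $x^{2(p+s)-1}$ can be negative on $[-\tfrac12,\tfrac12]$) is more careful about signs than the paper's one-sided bound $\frac{1}{k}$, but if you carry it through you land at the threshold $2\zeta_0(n)$, not $\zeta_0(n)$; so even after supplying the missing computation, your constants do not reproduce the lemma as stated. Second, you are right that the $p$-dependence must be dealt with, and right that the paper's own treatment is shaky here (its displayed estimate silently uses the $p=1$ data, and Remark \ref{r2}, as written, gives an upper bound $K_{(n,p)}\leq K_{(n,1)}$, which is the wrong direction for deducing positivity of $K_{(n,p)}$ from positivity of $K_{(n,1)}$). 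But declaring this reduction, together with the closed-form evaluation, to be ``the heart of the argument'' and leaving both as expectations means the decisive steps are absent: what you have established is only the (correct) reduction of the lemma to a numerical inequality.
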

\begin{proof} For $p=1$ from Remark \ref{r1} we
have\vskip0.3truecm
$$\alpha_{ij}=4^{i+j-n+3}\frac{\prod_{s=1}^{n}(2i+2s+1)\prod_{s=1,s\neq j}^{n}(2j+2s+1)}
{\prod_{s=1,s\neq i}^{n}(i-s)\prod_{s=1,s\neq
j}^{n}(j-s)}.$$\vskip0.3truecm
Then\\
$$\left|\frac{\alpha_{i,j+1}}{\alpha_{i,j}}\right|=\frac{4(4j+1)(2j+2n+3)}{(2j+3)(4j+5)},\ i=\overline{1,n},\ j=\overline{1,n-1}$$
and \vskip0.1truecm
$$\left|\frac{\alpha_{i+1,j}}{\alpha_{i,j}}\right|=\frac{4(n-i)(2i+2n+3)}{i(2i+3)},\
i=\overline{1,n},\
j=\overline{1,n-1}.$$\\
From above one has
$\max_{i,j=\overline{1,n}}|\alpha_{ij}|=|\alpha_{nn}|.$
By Remark \ref{r1} we can take\\
$$K_{(n,p)}\left(t-\frac{1}{2}, u-\frac{1}{2}; k\right)\geq 1- \frac{2}{3}\max_{i,j=\overline{1,n}}|\alpha_{ij}|
 \sum_{s=1}^{n}\left(\sqrt[k]{1+\left(\frac{1}{2}\right)^{2s+1}-1}\right)\geq$$\vskip0.3truecm
$$\geq 1- \frac{2|a_{nn}|}{3k}\sum_{s=1}^{n}\left(\frac{1}{2}\right)^{2s+1}\geq
1-(4^{n}-1)\cdot
\frac{64(2n+3)^{2}(2n+5)^{2}...(4n-1)^{2}(4n+1)}{9k
\left((n-1)!\right)^{2}}.$$ \vskip0.3truecm Since $k\geq
\zeta_0(n)$ one get
$K_{(n,p)}\left(t-\frac{1}{2},u-\frac{1}{2};k\right)>0.$ This
completes the proof.

\end{proof}

\begin{pro}\label{pr4} Let $n\in \mathbb{N}$. If  $k\geq \zeta_0(n)$ then the
 Hammerstein's nonlinear operator $H_{k}$ with the kernel
  $K_{(n,p)}\left(t-\frac{1}{2},u-\frac{1}{2};k\right) \,\,\ (p\in \mathbb{N})$ has at least n positive fixed points.
\end{pro}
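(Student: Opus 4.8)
The plan is to exhibit $n$ explicit positive fixed points of $H_k$ whose form is dictated by the separable structure of the kernel. First, Lemma \ref{lm1} guarantees that for $k\geq\zeta_0(n)$ the kernel $K_{(n,p)}(t-\frac12,u-\frac12;k)$ is strictly positive and continuous on $[0,1]^2$, so $H_k$ is a genuine Hammerstein operator acting on the cone $C_0^+[0,1]$ and the framework of Section~2 applies. For each $s_0\in\{1,\dots,n\}$ I would propose the candidate
$$f_{s_0}(t)=\sqrt[k]{1+\left(t-\tfrac12\right)^{2(p+s_0)-1}},\qquad t\in[0,1].$$
Since $2(p+s_0)-1$ is odd and $\left|t-\frac12\right|\leq\frac12<1$, the radicand stays strictly positive, so each $f_{s_0}\in C_0^+[0,1]$; and for distinct $s_0$ the exponents differ, so the $f_{s_0}$ are pairwise distinct functions (e.g. they take distinct values at $t=1$).

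Next I would verify $H_kf_{s_0}=f_{s_0}$ directly. Writing $f_{s_0}^k(u)=1+(u-\frac12)^{2(p+s_0)-1}$ and expanding the separable kernel, the fixed-point equation reduces to computing the coefficients
$$c_0=\int_0^1 f_{s_0}^k(u)\,du,\qquad c_s=\int_0^1\varphi_{(s,n,p)}\!\left(u-\tfrac12\right)f_{s_0}^k(u)\,du\quad(s=1,\dots,n),$$
after which $(H_kf_{s_0})(t)=c_0+\sum_{s=1}^n\big(\sqrt[k]{1+(t-\frac12)^{2(p+s)-1}}-1\big)c_s$. Thus it suffices to show $c_0=1$ and $c_s=\delta_{s,s_0}$, for then $(H_kf_{s_0})(t)=1+(f_{s_0}(t)-1)=f_{s_0}(t)$.

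The heart of the argument is the elementary moment identity $\int_0^1(u-\frac12)^{2m}\,du=\frac{1}{2m+1}\left(\frac12\right)^{2m}$, together with the vanishing of all odd moments $\int_0^1(u-\frac12)^{2m+1}\,du=0$. The odd-moment vanishing immediately gives $c_0=1$ (the odd term integrates away), and it kills the contribution of the constant $1$ in $f_{s_0}^k$ to each $c_s$, since $\varphi_{(s,n,p)}(u-\frac12)$ is a combination of odd powers of $u-\frac12$. For the surviving part of $c_s$, the product $\varphi_{(s,n,p)}(u-\frac12)\,(u-\frac12)^{2(p+s_0)-1}$ is a combination of even powers, and the moment identity shows that $\int_0^1(u-\frac12)^{2(p+j)-3}(u-\frac12)^{2(p+s_0)-1}\,du$ is exactly the $(j,s_0)$ entry of $\mathbf A_n^{(p)}$. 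Hence $c_s=\sum_{j=1}^n\alpha_{sj}\,(\mathbf A_n^{(p)})_{j s_0}$, and since $\{\alpha_{sj}\}=(\mathbf A_n^{(p)})^{-1}$ by Remark \ref{r1}, this sum collapses to $\delta_{s,s_0}$. This yields $H_kf_{s_0}=f_{s_0}$ for every $s_0\in\{1,\dots,n\}$, producing $n$ distinct positive fixed points.

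I expect the only genuine obstacle to be the recognition that the entries of $\mathbf A_n^{(p)}$ are precisely the even moments $\int_0^1(u-\frac12)^{2(2p+i+j-2)}\,du$ produced by these cross terms; once this is seen, the design of $\varphi_{(s,n,p)}$ through the inverse matrix $(\mathbf A_n^{(p)})^{-1}$ is exactly what forces the orthogonality relation $c_s=\delta_{s,s_0}$, and the remainder is routine bookkeeping with the parities of the exponents. It is worth noting that this construction is algebraic and produces the fixed points for every $p\in\mathbb{N}$ and every $k\geq2$; the hypothesis $k\geq\zeta_0(n)$ enters only through Lemma \ref{lm1}, to ensure the kernel is admissible (strictly positive), so that the $f_{s_0}$ are bona fide positive fixed points in the setting of the theorem.
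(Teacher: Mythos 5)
Your proposal is correct and follows essentially the same route as the paper: the paper's proof also takes the candidates $g_j(t)=\sqrt[k]{1+(t-\frac{1}{2})^{2(p+j)-1}}$, $j=1,\dots,n$, expands the separable kernel, uses the vanishing of odd moments of $u-\frac{1}{2}$ over $[0,1]$ together with the identification of the even moments as the entries of $\mathbf{A}_n^{(p)}$, and collapses the sum via $(\mathbf{A}_n^{(p)})^{-1}=\{\alpha_{ij}\}$ to get $H_k g_j=g_j$. Your additional remarks (strict positivity of the radicand, pairwise distinctness of the fixed points, and that $k\geq\zeta_0(n)$ is needed only through Lemma \ref{lm1} to make the kernel strictly positive) are accurate and in fact make the argument slightly more complete than the paper's own write-up.
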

\begin{proof} Let $f_{j}(u)=\sqrt[k]{1+u^{2(p+j)-1}},\
j=\overline{1,n}$ and $u_{1}=u-\frac{1}{2}, t_{1}=t-\frac{1}{2}.$
Put $g_j(t)=f_{j}(t-\frac{1}{2}).$ We are showing functions
$g_j(t)$ are fixed points of the Hammerstein operator $H_{k}$ with
the kernel
  $K_{(n,p)}\left(t-\frac{1}{2},u-\frac{1}{2};k\right):$

$$\int_{0}^{1}K_{(n,p)}\left(t-\frac{1}{2},u-\frac{1}{2};k\right)g_{j}^{k}(u)du=$$
$$=\int_{0}^{1}K_{(n,p)}\left(t-\frac{1}{2},u-\frac{1}{2};k\right)f_{j}^{k}\left(u-\frac{1}{2}\right)du=
  \int_{-\frac{1}{2}}^{\frac{1}{2}}K_{(n,p)}\left(t_{1},u_{1};k\right)f_{j}^{k}(u_{1})du_{1}=$$

$$\int_{\frac{1}{2}}^{-\frac{1}{2}}\left[1+\sum_{s=1}^{n}\left(\sqrt[k]{1+t_{1}^{2(p+s)-1}}-1\right)
\varphi_{(s,n,p)}(u_{1})\right]\left(1+u_{1}^{2(p+j)-1}\right)du_{1}=$$

 $$1+\sum_{s=1}^{n}\left(\sqrt[k]{1+t_{1}^{2(p+s)-1}}-1\right)
 \int_{\frac{1}{2}}^{\frac{1}{2}}\left(\alpha_{s1}u_{1}^{4(p-1)+2s+2j}+...+
 \alpha_{sn}u_{1}^{4p+2(s+j+n)-6}\right)du_{1}=$$

 $$1+\sum_{s=1}^{n}\left(\sqrt[k]{1+t_{1}^{2(p+s)-1}}-1\right)\left(\alpha_{sj}\beta_{s1}+...+\alpha_{nj}\beta_{sn}\right)=
 \sqrt[k]{1+t_{1}^{2(p+j)-1}}.$$ Hence

 $$\int_{0}^{1}K_{(n,p)}\left(t-\frac{1}{2},u-\frac{1}{2};k\right)g_{j}^{k}(u)du=g_{j}(t), \,\,\ j\in \{1,2,..., n\}.$$
\end{proof}

\begin{thm}\label{t5} For each $n\in \mathbb{N}$ there exists $\vartheta>1$ and a positive
continuous kernel $K(t,u)$ such that Hammerstein integral equation
(\ref{e1.2}) has n positive solutions.
\end{thm}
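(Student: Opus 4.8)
The plan is to obtain Theorem \ref{t5} as a direct consequence of Lemma \ref{lm1} and Proposition \ref{pr4}, which together carry all the analytic content; the remaining task is only to package their hypotheses into a single statement about equation (\ref{e1.2}). The key observation is that equation (\ref{e1.2}) with exponent $\vartheta$ is exactly the fixed point equation $H_\vartheta f=f$ for the Hammerstein operator, so a kernel producing $n$ distinct positive fixed points of $H_\vartheta$ immediately yields $n$ positive solutions of (\ref{e1.2}).

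First I would fix $n\in\mathbb{N}$ and choose a natural number $k$ with $k\geq\max\{2,\zeta_0(n)\}$. This single choice makes both the kernel $K_{(n,p)}(\cdot,\cdot;k)$ (defined only for $k\geq 2$) and Lemma \ref{lm1} (which requires $k\geq\zeta_0(n)$) simultaneously available. I then set $\vartheta=k>1$, fix $p=1$, and define
$$K(t,u)=K_{(n,p)}\!\left(t-\tfrac12,\,u-\tfrac12;\,k\right),\qquad (t,u)\in[0,1]^2.$$
By construction $K$ is continuous, and by Lemma \ref{lm1} it satisfies $K(t,u)>0$ on $[0,1]^2$, so $K$ is a strictly positive continuous kernel of the type allowed in (\ref{e1.2}). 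Proposition \ref{pr4} then applies verbatim: the operator $H_k$ with this kernel possesses the $n$ fixed points
$$g_j(t)=\sqrt[k]{\,1+\left(t-\tfrac12\right)^{2(p+j)-1}\,},\qquad j=1,\dots,n,$$
each satisfying $\int_0^1 K(t,u)g_j^{k}(u)\,du=g_j(t)$, that is, equation (\ref{e1.2}) with $\vartheta=k$.

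The only point requiring genuine care is distinctness: I must verify that $g_1,\dots,g_n$ are pairwise different functions, so that they count as $n$ solutions rather than fewer. This is where I would spend the argument, and here it is immediate, since the exponents $2(p+j)-1$ are distinct odd integers; evaluating at $t=1$ gives $g_j(1)=\sqrt[k]{\,1+(1/2)^{2(p+j)-1}\,}$, which is strictly decreasing in $j$, so these values are pairwise distinct and hence so are the functions. Collecting these facts — $\vartheta=k>1$, the kernel $K$ strictly positive and continuous, and $n$ distinct positive solutions $g_1,\dots,g_n$ of (\ref{e1.2}) — completes the proof. I expect no serious obstacle beyond this bookkeeping, because the substantive work (positivity of the kernel in Lemma \ref{lm1} and the explicit verification that the $g_j$ are fixed points in Proposition \ref{pr4}) has already been discharged.
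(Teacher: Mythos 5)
Your proposal is correct and follows exactly the route the paper intends: Theorem \ref{t5} is stated as an immediate consequence of Lemma \ref{lm1} (strict positivity of the kernel $K_{(n,p)}$ for $k\geq\zeta_0(n)$) and Proposition \ref{pr4} (the explicit fixed points $g_j$ of $H_k$), with $\vartheta=k$ and $K(t,u)=K_{(n,p)}\left(t-\tfrac12,u-\tfrac12;k\right)$, which is precisely your construction. Your verification that the $g_j$ are pairwise distinct (via the values $g_j(1)$) is a small but worthwhile addition that the paper leaves implicit in the phrase ``at least $n$ positive fixed points.''
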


\section{Gibbs measures for models on Cayley tree $\Gamma^k$}

In this section we study Gibbs measures for models on Cayley tree.
You may be acquaint with definitions and properties of Gibbs
measures in books \cite{g2011}, \cite{p1974}, \cite{u2013}. A
Cayley tree (Bethe lattice) $\Gamma^k$ of order $k\in \mathbb{N}$
is an infinite homogeneous tree, i.e., a graph without cycles,
such that exactly $k+1$ edges originate from each vertex. Let
$\Gamma^k=(V,L)$ where $V$ is the set of vertices and $L$ that of
edges (arcs). Two vertices $x$ and $y$ are called nearest
neighbors if there exists an edge $l\in L$ connecting them. We
will use the notation $l=\langle x,y\rangle$. A collection of
nearest neighbor pairs $\langle x,x_{1}\rangle ,\langle
x_{1},x_{2}\rangle,...\langle x_{d-1},y\rangle$ is called a $path$
from $x$ to $y$. The distance $d(x,y)$ on the Cayley tree is the
number of edges of the shortest path from $x$ to $y$.

 For a fixed
$x^{0}\in V$, called the root, we set
$$W_{n}=\{x\in V| d(x,x^{0})=n\}, \,\,\,\,\,\,\,\
 V_{n}=\bigcup_{m=0}^{n}W_{m}$$
  and denote
  $$S(x)=\{y\in W_{n+1}: d(x,y)=1\}, x\in W_{n},$$
  the set of $direct$  $successors$ of $x$.

Consider models where the spin takes values in the set $[0,1]$,
and is assigned to the vertexes of the tree. For $A\subset V$ a
configuration $\sigma_A$ on $A$ is an arbitrary function
$\sigma_A:A\to [0,1]$. Denote $\Omega_A=[0,1]^A$ the set of all
configurations on $A$ and $\Omega=[0,1]^V$. The Hamiltonian on
$\Gamma^k$ of the model is

\begin{equation}\label{2.1} H(\sigma)=-J\sum_{\langle x,y\rangle\in L}
\xi\left(\sigma(x),\sigma(y)\right), \,\,\ \sigma\in\Omega \end{equation}\\
 where $J \in R\setminus \{0\}$ and
$\xi: (u,v)\in [0,1]^2\to \xi_{u,v}\in \mathbb{R}$ is a given
bounded, measurable function.

Let $\lambda$ be the Lebesgue measure on $[0,1]$.  On the set of
all configurations on $A$ the a priori measure $\lambda_A$ is
introduced as the $|A|$ fold product of the measure $\lambda $.
Here and further on $|A|$ denotes the cardinality of $A$.   We
consider a standard sigma-algebra ${\mathcal B}$ of subsets of
$\Omega=[0,1]^V$ generated by the measurable cylinder subsets.

Let $\sigma_n:\;x\in V_n\mapsto \sigma_n(x)$ is a configuration in
$V_n$ and $h:\;x\in V\mapsto h_x=(h_{t,x}, t\in [0,1]) \in
\mathbb{R}^{[0,1]}$ be mapping of $x\in V\setminus \{x^0\}$.
Given $n=1,2,\ldots$, consider the probability distribution
$\mu^{(n)}$ on $\Omega_{V_n}$ defined by

\begin{equation}\label{2.2}\mu^{(n)}(\sigma_n)=Z_n^{-1}\exp\left(-\beta H(\sigma_n)
+\sum_{x\in W_n}h_{\sigma(x),x}\right).\end{equation}\\
 Here, as before, $\sigma_n:x\in V_n\mapsto
\sigma(x)$ and $Z_n$ is the corresponding partition function:

\begin{equation}\label{2.3}Z_n=\int_{\Omega_{V_n}} \exp\left(-\beta H({\widetilde\sigma}_n)
+\sum_{x\in W_n}h_{{\widetilde\sigma}(x),x}\right)
\lambda_{V_n}(\widetilde{\sigma}_{n}),\end{equation}\\

 where $\beta=T^{-1}, T>0 -$ temperature. The probability distributions $\mu^{(n)}$ are compatible \cite{er2010} if for any
$n\geq 1$ and $\sigma_{n-1}\in\Omega_{V_{n-1}}$:

\begin{equation}\label{2.4}\int_{\Omega_{W_n}}\mu^{(n)}(\sigma_{n-1}\vee\omega_n)\lambda_{W_n}(d(\omega_n))=
\mu^{(n-1)}(\sigma_{n-1}). \end{equation}\\
 Here
$\sigma_{n-1}\vee\omega_n\in\Omega_{V_n}$ is the concatenation of
$\sigma_{n-1}$ and $\omega_n$. In this case there exists
\cite{er2010} a unique measure $\mu$ on $\Omega_V$ such that, for
any $n$ and $\sigma_n\in\Omega_{V_n}$, $\mu \left(\left\{\sigma
\Big|_{V_n}=\sigma_n\right\}\right)=\mu^{(n)}(\sigma_n)$.\\
The measure $\mu$ is called {\it splitting Gibbs measure}
corresponding to Hamiltonian (\ref{2.1}) and function $x\mapsto
h_x$, $x\neq x^0$.

The following statement describes conditions on $h_x$ guaranteeing
compatibility
of the corresponding distributions $\mu^{(n)}(\sigma_n).$\\
\begin{pro}\label{pr5}\cite{er2010} The probability distributions
$\mu^{(n)}(\sigma_n)$, $n=1,2,\ldots$, in (\ref{2.2}) {\sl are
compatible iff for any $x\in V\setminus\{x^0\}$ the following
equation holds:

\begin{equation}\label{2.5} f(t,x)=\prod_{y\in S(x)}{\int_0^1\exp(J\beta\xi_{t,u})f(u,y)du
\over \int_0^1\exp(J\beta{\xi_{0,u}})f(u,y)du}. \end{equation}\\
 Here,
and below $f(t,x)=\exp(h_{t,x}-h_{0,x}), \ t\in [0,1]$ and
$du=\lambda(du)$ is the Lebesgue measure.}
\end{pro}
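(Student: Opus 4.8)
The plan is to prove both implications by carrying out the marginalization in (\ref{2.4}) explicitly, exploiting the tree structure to factorize the integral over the outermost layer $W_n$. First I would split the Hamiltonian along the edges joining $W_{n-1}$ to $W_n$: since every vertex $y\in W_n$ has a unique parent in $W_{n-1}$, the edge set of $V_n$ is the disjoint union of the edges of $V_{n-1}$ and the edges $\langle x,y\rangle$ with $x\in W_{n-1}$, $y\in S(x)$, giving $H(\sigma_n)=H(\sigma_{n-1})-J\sum_{x\in W_{n-1}}\sum_{y\in S(x)}\xi(\sigma(x),\sigma(y))$. Substituting this into (\ref{2.2}) and into the left-hand side of (\ref{2.4}), the factor $\exp(-\beta H(\sigma_{n-1}))$ pulls out, and the integral over $\Omega_{W_n}=[0,1]^{W_n}$ factorizes into a product over $y\in W_n$, because each integration variable $\sigma(y)$ interacts only with its parent $x$. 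This reduces the left-hand side of (\ref{2.4}) to
\[
Z_n^{-1}e^{-\beta H(\sigma_{n-1})}\prod_{x\in W_{n-1}}\prod_{y\in S(x)}\int_0^1\exp\!\big(\beta J\xi_{\sigma(x),u}+h_{u,y}\big)\,du.
\]

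Next I would introduce the normalization $f(t,x)=\exp(h_{t,x}-h_{0,x})$ by writing $\exp(h_{u,y})=\exp(h_{0,y})f(u,y)$ and $\exp(h_{\sigma(x),x})=\exp(h_{0,x})f(\sigma(x),x)$, so that the constant prefactors $\prod\exp(h_{0,y})$ and $\prod\exp(h_{0,x})$ separate out. Comparing with $\mu^{(n-1)}(\sigma_{n-1})=Z_{n-1}^{-1}\exp(-\beta H(\sigma_{n-1})+\sum_{x\in W_{n-1}}h_{\sigma(x),x})$ and cancelling the common factor $\exp(-\beta H(\sigma_{n-1}))$, the compatibility identity (\ref{2.4}) becomes equivalent to
\[
\prod_{x\in W_{n-1}}\frac{\prod_{y\in S(x)}\int_0^1\exp(\beta J\xi_{\sigma(x),u})f(u,y)\,du}{f(\sigma(x),x)}=C,
\]
where $C$ collects all the $\sigma_{n-1}$-independent constants ($Z_n$, $Z_{n-1}$, and the products of $\exp(h_{0,\cdot})$). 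The whole argument then turns on analyzing when this product is constant in $\sigma_{n-1}$.

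For necessity I would use that the coordinates $\sigma(x)$, $x\in W_{n-1}$, range independently over $[0,1]$: fixing all but one vertex $x$ and varying $\sigma(x)=t$ forces the single factor $G(t,x):=f(t,x)^{-1}\prod_{y\in S(x)}\int_0^1\exp(\beta J\xi_{t,u})f(u,y)\,du$ to be independent of $t$. Here strict positivity of $\exp(\beta J\xi)$ and of $f$ guarantees every factor is nonzero, so this isolation is legitimate. Evaluating at $t=0$, where $f(0,x)=1$, identifies the constant value of $G(\cdot,x)$ with $\prod_{y\in S(x)}\int_0^1\exp(\beta J\xi_{0,u})f(u,y)\,du$, and rearranging $G(t,x)=G(0,x)$ yields precisely (\ref{2.5}) at $x$; since $n$ and $x\in W_{n-1}$ are arbitrary, this covers all $x\in V\setminus\{x^0\}$. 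For sufficiency I would run the computation backwards: (\ref{2.5}) says each $G(t,x)$ equals $G(0,x)$, so the product above is the constant $\prod_x G(0,x)$, and the left-hand side of (\ref{2.4}) becomes a constant multiple of $\mu^{(n-1)}(\sigma_{n-1})$. That multiplicative constant must equal $1$: integrating both sides over $\Omega_{V_{n-1}}$ and using that $\mu^{(n)}$ and $\mu^{(n-1)}$ are probability measures forces it, so (\ref{2.4}) holds.

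The only genuinely delicate point is the passage, in the necessity direction, from the statement that the product over $W_{n-1}$ is constant to the statement that each factor is separately constant; this is exactly where the tree hypothesis enters, through the independence of the spin values at distinct vertices of $W_{n-1}$. Everything else is bookkeeping of positive constants, and the self-consistency of the normalizing constant in the sufficiency direction is automatic from the probability-measure normalization.
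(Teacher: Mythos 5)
Your proof is correct. Note that the paper gives no proof of its own here --- Proposition \ref{pr5} is quoted from \cite{er2010} --- and your argument (splitting $H(\sigma_n)$ along the edges from $W_{n-1}$ to $W_n$, factorizing the integral over $\Omega_{W_n}$ vertex by vertex, isolating each factor $G(t,x)$ by varying one spin at a time and evaluating at $t=0$ where $f(0,x)=1$, and fixing the constant in the sufficiency direction by integrating \eqref{2.4} against $\lambda_{V_{n-1}}$) is essentially the standard proof given in that reference, with your normalization argument for the constant a slightly cleaner alternative to matching $Z_n/Z_{n-1}$ explicitly.
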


We consider $\xi_{tu}$ as a continuous function and we are going
to solve equation (\ref{2.5}) in the class of
$translation-invariant$ functions $f(t,x)$ (i.e. $f(t,x)=f(t)$ for
all $x\in \Gamma^k\setminus \{x_0\}$) and we'll show that there
exists a finite number of $translation-invariant$ Gibbs measures
for model (\ref{2.1}).

Then for $translation$-$invariant$ functions the equation
(\ref{2.5}) can be written as
 \begin{equation}\label{3.90} (R_{k}f)(t)=f(t), \ k\in \mathbb{N} \end{equation}\\

 where $K(t,u)=Q(t,u)=\exp(J\beta \xi_{tu}),\,\
f(t)\in C^+_{0}[0,1],\,\ t,u\in [0,1]$ (see \cite{er2010},\cite{ehr2013}).\\

Consequently, for each $k\in \mathbb{N}, k\geq2$ Hammerstein
integral equation corresponding to the equation (\ref{3.90})  has
the following form:

  \begin{equation}\label{4.1} \int^{1}_{0}Q(t,u)f^{k}(u)du=f(t).
  \end{equation}\\

By Theorem \ref{t3} and Propositions \ref{pr4}, \ref{pr5} we'll
get following Theorem.

\begin{thm}\label{t6} Let $n\in \mathbb{N}$. If $k\geq \zeta_0(n)$ then the model

$$H(\sigma)=-\frac{1}{\beta}\sum_{<x,y>}ln\left(K_{(n,p)}
 \left(\sigma(x)-\frac{1}{2},
 \sigma(y)-\frac{1}{2};k\right)\right), \,\,\ \sigma\in\Omega, (p\in \mathbb{N})$$\\
 on the Cayley tree $\Gamma^{k}$ has at least $n$ translation-invariant Gibbs
 measures.\end{thm}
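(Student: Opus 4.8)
The plan is to obtain Theorem \ref{t6} by chaining the earlier results, reading it as the Gibbs-measure reformulation of the fixed-point count for $H_{k}$. First I would choose $\xi_{t,u}$ so that $\exp(J\beta\xi_{t,u})=K_{(n,p)}\left(t-\frac{1}{2},u-\frac{1}{2};k\right)$, that is, $J\beta\xi_{t,u}=\ln K_{(n,p)}\left(t-\frac{1}{2},u-\frac{1}{2};k\right)$; substituting into the general Hamiltonian (\ref{2.1}) reproduces exactly the Hamiltonian in the statement, and the kernel $Q(t,u)=\exp(J\beta\xi_{t,u})$ that enters (\ref{2.5}) and (\ref{4.1}) becomes $K_{(n,p)}\left(t-\frac{1}{2},u-\frac{1}{2};k\right)$. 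By Lemma \ref{lm1}, the hypothesis $k\geq\zeta_0(n)$ forces this kernel to be strictly positive and continuous on $[0,1]^{2}$, so $\xi$ is bounded, continuous and measurable and the model is well defined.

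Next I would use Proposition \ref{pr5} to pass from measures to fixed points: translation-invariant splitting Gibbs measures correspond to nontrivial positive translation-invariant solutions $f$ of the compatibility equation (\ref{2.5}), which in the translation-invariant class reduces to the fixed-point equation (\ref{3.90}), $R_{k}f=f$ with $K=Q$. By Theorem \ref{t3} one has $N_{fix.p}(R_{k})=N_{fix.p}(H_{k})$, so it suffices to count solutions of the Hammerstein equation (\ref{4.1}), $H_{k}f=f$. Here I would apply Proposition \ref{pr4}: with this kernel and $k\geq\zeta_0(n)$, the operator $H_{k}$ has at least the $n$ fixed points $g_{j}(t)=\sqrt[k]{1+\left(t-\frac{1}{2}\right)^{2(p+j)-1}}$, $1\leq j\leq n$, whence $N_{fix.p}(R_{k})\geq n$; thus $R_{k}$ has at least $n$ nontrivial positive fixed points, each yielding a translation-invariant Gibbs measure.

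The step I expect to be the main obstacle is establishing that the $n$ resulting measures are genuinely distinct, for which two claims must be checked. First, the $g_{j}$ must lie on distinct rays, i.e.\ no two are positive scalar multiples of one another: evaluating a hypothetical identity $g_{j}=c\,g_{j'}$ at $t=\frac{1}{2}$ forces $c=1$, and comparing the monomials $\left(t-\frac{1}{2}\right)^{2(p+j)-1}$ then forces $j=j'$, so the $n$ fixed points of $H_{k}$ sit on $n$ distinct rays and hence, under the bijection underlying Theorem \ref{t3}, give $n$ distinct fixed points of $R_{k}$. Second, distinct fixed points $f$ of $R_{k}$ must produce distinct measures; this holds because each such $f$ is automatically normalized by $f(0)=(R_{k}f)(0)=1$ and is recovered from $\mu$ through the single-site data $f(t)=\exp(h_{t,x}-h_{0,x})$, so the assignment $f\mapsto\mu$ is injective. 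Granting these two points, the $n$ fixed points produce $n$ pairwise distinct translation-invariant Gibbs measures, which is the assertion of the theorem.
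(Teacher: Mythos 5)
Your proposal follows exactly the paper's route: the paper obtains Theorem \ref{t6} precisely by chaining Proposition \ref{pr5} (the compatibility/Gibbs-measure correspondence), Theorem \ref{t3} (equality of the fixed-point counts of $R_{k}$ and $H_{k}$), and Proposition \ref{pr4} (the $n$ explicit fixed points of $H_{k}$ with the kernel $K_{(n,p)}$), with Lemma \ref{lm1} ensuring the kernel is strictly positive so the Hamiltonian is well defined. The paper states this chain in a single line without further detail, so your extra verification that the $n$ fixed points are pairwise distinct and yield pairwise distinct translation-invariant measures only makes the argument more complete than the paper's own.
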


\end{document}